\newtheorem{theorem}{Theorem}
\newtheorem{lemma}[theorem]{Lemma}
\newenvironment{proof}{\noindent{\scshape Proof.}}{\hspace*{2mm} $\square$}
\newcommand{\C}{\mathscr{C}}
\newcommand{\T}{\mathbb{T}}
\newcommand{\n}{\hspace*{-6pt}}
\DeclareMathOperator{\card}{card \,}
\DeclareMathOperator{\bernoulli}{Bernoulli \,}
\DeclareMathOperator{\var}{Var}
\DeclareMathOperator{\diam}{diam}
\begin{document}

\begin{frontmatter}
\title     {Bidirectional bond percolation model for the \\ spread of information in financial markets}
\runtitle  {Spread of information in financial markets}
\author    {Stefano Chiaradonna and Nicolas Lanchier\thanks{This work is partially supported by NSF grant CNS-2000792.}}
\runauthor {Stefano Chiaradonna and Nicolas Lanchier}
\address   {School of Mathematical and Statistical Sciences \\ Arizona State University \\ Tempe, AZ 85287, USA. \\ schiarad@asu.edu \\ nicolas.lanchier@asu.edu}

\begin{abstract} \ \
 Information is a key component in determining the price of an asset in financial markets, and the main objective of this paper is to study the spread of information in this context.
 The network of interactions in financial markets is modeled using a Galton-Watson tree where vertices represent the traders and where two traders are connected by an edge if one of the two traders sells the asset to the other trader.
 The information starts from a given vertex and spreads through the edges of the graph going independently from seller to buyer with probability~$p$ and from buyer to seller with probability~$q$.
 In particular, the set of traders who are aware of the information is a (bidirectional) bond percolation cluster on the Galton-Watson tree.
 Using some conditioning techniques and a partition of the cluster of open edges into subtrees, we compute explicitly the first and second moments of the cluster size, i.e., the random number of traders who learn about the information.
 We also prove exponential decay of the diameter of the cluster in the subcritical phase.
\end{abstract}

\begin{keyword}[class=AMS]
\kwd[Primary ]{60K35}
\end{keyword}

\begin{keyword}
\kwd{Bond percolation, Galton-Watson tree, financial market, information}
\end{keyword}

\end{frontmatter}

\maketitle

%%%%%%%%%%%%%%%%%%%%%%%%%%%%%%%%%%%%%%%%%%%%%%%%%%%%%%%%%%%%%%%%%%%%%%%%%%%%%%%%%%%%%%%%%%%%%%%%%%%%%%%%%%%%%%%%%%%%%%%%%%%%%%%%%%%%%%%%%%%%%%%%%%%%%%%%%%%%%%%%%%%%%%%%%%%%

\section{Introduction}
 A wealth of models have been developed to study financial markets, defined as \emph{any places or systems that provide buyers and sellers the means to trade financial instruments}~\cite{financial_market_def}.
 Financial markets consist in particular of systems where buyers and sellers interact.
 One class of models of financial markets that have gained attraction are the ones based on percolation theory, the branch of probability concerned with the statistical properties of random clusters on graphs~\cite{broadbent_hammersley_1957, grimmett_1999}.
 Popular examples of percolation models used to study financial markets are the Cont-Bouchaud market model and the Ising model.
 The Cont-Bouchaud market model uses percolation on the~$d$-dimensional lattice, where each occupied site represents a trader~\cite{Stauffer_2001}.
 Traders can be partitioned into clusters of the same opinion, and the price variations of the asset is controlled by the size of these clusters~\cite{Stauffer_1999}.
 Similarly, the Ising model is used to study the interactions taking place in financial markets due to its ability to describe the competition between social imitation of the model's individual elements and the impact of private information~\cite{Ising_Model_Economics}. \\
\indent
 Due to the presence of interactions between buyers and sellers, a trader's decision is influenced by the news received from others within the market~\cite{WANG2010431}.
 In particular, information has become an important factor in determining the price of an asset.
 This relates to the concept of price efficiency, defined as \emph{the degree to which prices reflect all available information in terms of speed and accuracy}~\cite{saffi_2011}.
 The value of markets depends on the amount of information about prices, which reveals whether an investor should buy or sell \cite{bond_et_al}.
 The disagreements among investors about the expected prices due to diverse investor information is a common feature in most financial markets~\cite{albagli_2011}, and these differences in the distribution of private information may explain why informational efficiency can vary significantly~\cite{CORGNET2020103671}. \\
\indent
 The literature about the impact of information on financial markets is copious.
 Edmans et al.~\cite{edmans_2017} argued that decisions depend not only on the total amount of information in prices but also on the source of the information.
 Albagli et al.~\cite{albagli_2011} developed a model of asset pricing to demonstrate that the heterogeneity of information and its aggregation within a financial market are core forces in determining asset prices.
 Corgnet et al.~\cite{CORGNET2020103671} proposed that the concentration of private information across the investors is the determinant of information efficiency.
 Lv and Wu~\cite{margin_trading_2020} investigated the effect of margin trading on information content and price-adjustment speed of price efficiency.
 In particular, developing and studying mathematical models for the spread of information in financial markets is of primary importance. \vspace*{5pt} \\
{\bf Galton-Watson random tree.}
 Before describing how the information spreads, the first step is to explain the topological structure of a financial market and how the traders interact.
 Our model starts with an Initial Public Offering~(IPO), which is \emph{when a company first sells its shares to the public}~\cite{ipo}, and the financial market consists of buyers and sellers that consider this IPO as their only asset of interest.
 The IPO sells its shares to some buyers who then sell their shares to other buyers, and so on, and we assume that potential buyers only purchase from a single seller and that sellers are not currently interested in repurchasing any shares of the asset.
 Therefore, the only potential buyers are those who have not already possessed the shares.
 In particular, thinking of each trader as a vertex and drawing an edge between two traders if one of the two traders sells the shares to the other trader results in a graph with no cycles, meaning that the network of interactions among traders exhibits a tree structure.
 To also account for the randomness inherent to a financial market, we assume that the network of interactions is a Galton-Watson tree:
 the graph starts from the~IPO~(the root of the tree) and the random number of buyers of each seller~(the number of edges starting from each vertex going away from the root) follows a fixed distribution.  \vspace*{5pt} \\
{\bf Bidirectional bond percolation.}
 As previously mentioned, information is a key component in the pricing of an asset.
 To model the spread of information within the financial market, we assume that a trader in the already formed random tree has new information about the price of the shares.
 In order to price the shares accurately, this trader and any other traders who learn the information in the future gather more information by inquiring their buyers and/or their seller so that the information spreads through the edges of the graph with some fixed probabilities.
 Due to the asymmetry in the seller-buyer relationship, we assume that the information spreads from a seller to each of her buyers independently with probability~$p$ while it spreads from a buyer to her seller with probability~$q$.
 In mathematical terms, the resulting set of traders who learn the information is an open cluster in a (bidirectional) bond percolation process on the Galton-Watson tree where edges are independently open with probability~$p$ in the seller-buyer direction going away from the root and with probability~$q$ in the buyer-seller direction going towards the root. \vspace*{5pt} \\
 The main objective of this paper is to study some of the statistical properties of the size distribution of the open cluster, i.e., the number of traders who learn about the information.
 More precisely, we compute explicitly the first and second moments of the cluster size, and prove exponential decay of the diameter of the cluster in the subcritical phase.

%%%%%%%%%%%%%%%%%%%%%%%%%%%%%%%%%%%%%%%%%%%%%%%%%%%%%%%%%%%%%%%%%%%%%%%%%%%%%%%%%%%%%%%%%%%%%%%%%%%%%%%%%%%%%%%%%%%%%%%%%%%%%%%%%%%%%%%%%%%%%%%%%%%%%%%%%%%%%%%%%%%%%%%%%%%%

\section{Model description and main results}
 To define the model rigorously, let~$\T = (V, E)$ be a realization of the Galton-Watson tree with offspring distribution~$(p_k)$, meaning that, independently for each vertex, there are~$k$ edges starting from this vertex and going away from the root of the tree with probability~$p_k$.
 In the terminology of branching processes, two vertices connected by an edge are called the parent and the offspring, with the parent being the vertex closer to the root.
 In the context of financial markets, the parent plays the role of the seller while the offspring plays the role of the buyer.
 To ensure survival of the tree and avoid trivialities, we assume that~$p_0 = 0$.
 To state our results later, we let~$\mu$ and~$\sigma^2$ be respectively the mean and the variance of the number of offspring per individual:
 $$ \mu = \sum_{k = 1}^{\infty} \,k p_k \quad \hbox{and} \quad \sigma^2 = \sum_{k = 1}^{\infty} \,(k - \mu)^2 p_k. $$
 To model the spread of information within the financial market, we use bidirectional bond percolation on the Galton-Watson tree.
 More precisely, we let~$p, q \in (0, 1)$ and assume that each of the edges of the tree is independently open
 $$ \begin{array}{rcl}
    \hbox{with probability~$p$} & \hbox{in the direction parent $\to$ offspring} \vspace*{4pt} \\
    \hbox{with probability~$q$} & \hbox{in the direction offspring $\to$ parent} \end{array} $$
 In other words, each edge is identified to two arrows.
 The arrow going away from the root is open with probability~$p$ whereas the arrow going towards the root is open with probability~$q$.
 Injecting the information at vertex~$x$, which represents the trader who first learns about the information, we are interested in the size~$S$ of the cluster of open edges~$\C$ starting at~$x$, i.e.,
 $$ S = \card (\C) \quad \hbox{where} \quad \C = \{y \in V : \hbox{there is a directed open path~$x \to y$} \}. $$
 In the terminology of percolation theory, we assume that a fluid injected at vertex~$x$ can only spread through the open edges so the vertices in cluster~$\C$ are called wet vertices.
 In the context of financial markets, vertex~$x$ represents the source of the information while the wet vertices are the traders who are aware of the information. \vspace*{5pt} \\
\noindent {\bf First moment.}
 To begin with, we look at the first moment of the cluster size~$S$, which represents the mean number of traders who are aware of the information.
 Although we model the financial market using an infinite Galton-Watson tree, we study the first moment on the truncated Galton-Watson tree with radius~$R$, defined as the subgraph of~$\T$ induced by the vertices at distance at most~$R$ from the root of the tree, in order to also extend a result in~\cite{jevtic_lanchier_2020} concerned with the symmetric case~$p = q$.
 By monotone convergence, the first moment on the infinite tree can be deduced by simply taking the limit as~$R \to\infty$.
 Partitioning the set of wet vertices into subtrees and using independence, we obtain the following theorem.
\begin{theorem}-- \
\label{th:first}
 The conditional first moment on the tree with radius~$R$ given that the information starts at distance~$r$ from the root is equal to
 $$ E_r (S) = \frac{1}{1 - \mu p} \bigg(1 + q \bigg(\frac{1 - q^r}{1 - q} \bigg) (1 - p) - (\mu p)^{R - r + 1} \bigg(\frac{1 - pq (1 + (\mu - 1)(\mu pq)^r)}{1 - \mu pq} \bigg) \bigg). $$
\end{theorem}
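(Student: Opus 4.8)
The plan is to condition on the ancestral line joining the source~$x$ to the root and then to split the wet cluster into independent pieces hanging off this line. Write $x = x_0, x_1, \dots, x_r$ for the vertices on the unique path from~$x$ to the root, so that $x_a$ sits at distance $r - a$ from the root and at distance~$a$ above~$x$. Since the tree has no cycles, any wet vertex~$y$ is reached from~$x$ by first travelling up the spine to the lowest common ancestor of~$x$ and~$y$ and then down into a subtree, where the upward edges must be open in the offspring-to-parent direction and the downward edges in the parent-to-offspring direction. This suggests the partition $\C = \C_0 \cup \C_1 \cup \dots \cup \C_r$, where $\C_0$ is the set of vertices wet through purely downward spread from~$x$, and for $a \geq 1$ the set~$\C_a$ consists of~$x_a$ together with the vertices wet through the side subtrees rooted at the children of~$x_a$ other than the spine-child~$x_{a-1}$. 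The vertex~$x_a$ is reached only if all~$a$ spine edges below it are open in the upward direction, an event of probability~$q^a$ that is independent of what happens inside the side subtrees.

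First I would compute the expected size of a purely downward cluster. Let~$A_d$ denote the conditional expectation of the number of wet vertices reachable from a vertex at distance~$d$ from the root using only parent-to-offspring edges, inside the tree truncated at radius~$R$. A vertex at distance~$R$ has no offspring in the truncated tree, so $A_R = 1$, while for $d < R$ the vertex contributes itself plus, on average, $\mu$ children each independently reached with probability~$p$ and each generating a fresh downward cluster of mean size~$A_{d+1}$; hence $A_d = 1 + \mu p\, A_{d+1}$. Solving this linear recursion gives the geometric sum $A_d = \sum_{j=0}^{R-d} (\mu p)^j = (1 - (\mu p)^{R-d+1})/(1 - \mu p)$, so that the contribution of~$\C_0$ is $E_r(\card (\C_0)) = A_r$.

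Next I would evaluate the spine contributions. Conditioned on the ancestral line, each ancestor~$x_a$ carries, besides the spine-child~$x_{a-1}$, a number of further children whose mean is $\mu - 1$, and the subtrees they root are independent copies of the downward process started at distance $r - a + 1$ from the root. Using independence of the upward spine edges from these side subtrees, one gets $E_r(\card (\C_a)) = q^a (1 + (\mu - 1)\, p\, A_{r-a+1})$ for $a \geq 1$. Summing over~$a$ by linearity of expectation then yields
\[ E_r(S) = A_r + \sum_{a=1}^{r} q^a \bigl( 1 + (\mu - 1)\, p\, A_{r-a+1} \bigr), \]
and substituting the closed form for~$A_{r-a+1}$ produces two geometric series, one with ratio~$q$ and one with ratio~$\mu p q$, whose evaluation and recombination give the announced expression.

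The main obstacle is the bookkeeping in the conditioning step rather than any single hard estimate: one must argue carefully that, given the ancestral line, the side subtrees are genuine independent Galton-Watson trees and that the expected number of non-spine children of each~$x_a$ is exactly~$\mu - 1$. This is where the assumption $p_0 = 0$ and the precise meaning of conditioning on a source at distance~$r$ enter, and obtaining the plain offspring mean~$\mu$ rather than a size-biased version is essential, since it is exactly what keeps the variance~$\sigma^2$ out of the first moment. The remaining difficulty is purely computational, namely collapsing the nested geometric sums into the stated closed form.
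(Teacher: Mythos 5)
Your proposal is correct and follows essentially the same route as the paper: your partition of the cluster into the downward subtree from the source plus the side subtrees hanging off each wet ancestor is exactly the paper's decomposition into $S(x_r)$ and the $S(x_j \setminus x_{j+1})$, your recursion for $A_d$ is the paper's Lemma~\ref{lem:plus}, and your factor $q^a$ is the paper's $P(D \geq a)$ (the paper conditions on $D$ and exchanges sums, which amounts to the same use of linearity). The paper reaches the same closed form via Lemmas~\ref{lem:D}--\ref{lem:mean}, including the observation that the non-spine offspring count has mean $\mu - 1$ with no size-biasing.
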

 It directly follows from the theorem that, in the supercritical phase~$\mu p > 1$, the first moment of the cluster size on the infinite Galton-Watson tree is infinite.
 In contrast, in the subcritical phase~$\mu p < 1$, the first moment on the infinite tree reduces to
 $$ E_r (S) = \frac{1}{1 - \mu p} \bigg(1 + q \bigg(\frac{1 - q^r}{1 - q} \bigg) (1 - p) \bigg). $$
 In addition, setting~$p = q$ in the theorem gives
 $$ E_r (S) = \frac{1}{1 - \mu p} \bigg(1 + p (1 - p^r) - (\mu p)^{R - r + 1} \bigg(\frac{1 - p^2 (1 + (\mu - 1)(\mu p^2)^r)}{1 - \mu p^2} \bigg) \bigg), $$
 which is exactly the expression found in~\cite[Theorem~4]{jevtic_lanchier_2020}, but we point out that, even though our result extends the result in~\cite{jevtic_lanchier_2020} to the asymmetric case, our approach leads to a much shorter and more elegant proof.
 More precisely, the proof in~\cite{jevtic_lanchier_2020} relies on a tedious combinatorial argument that consists in counting the number of open paths of a given length starting from the source of the information whereas our proof consists in finding the wet vertices along the path going from the source of the information to the root of the tree and then partitioning the cluster of wet vertices into (disjoint) subtrees starting from each of these vertices. \vspace*{5pt} \\
\noindent {\bf Second moment.}
 We now study the second moment of the cluster size on the infinite tree.
 It follows from Theorem~\ref{th:first} that, on the infinite Galton-Watson tree, all the higher moments are infinite in the supercritical phase so we assume that~$\mu p < 1$.
 The key to computing the second moment is independence.
 More precisely, writing again the cluster of wet vertices as a disjoint union of subtrees, the second moment can be computed explicitly using that the sizes of these subtrees are independent and identically distributed random variables, which gives the following result.
\begin{theorem}-- \
\label{th:second}
 Let~$\mu p < 1$.
 Then, the conditional second moment on the infinite tree given that the information starts at distance~$r$ from the root is equal to
 $$ \begin{array}{rcl}
    \displaystyle E_r (S^2) & \n = \n &
    \displaystyle \frac{1}{(1 - \mu p)^2} \ \bigg(1 + \frac{p (1 - p) \mu + p^2 \sigma^2}{1 - \mu p} \bigg) \vspace*{8pt} \\ && \hspace*{5pt} + \
    \displaystyle \bigg(1 + \frac{2 (1 + (\mu - 1) p)}{1 - \mu p} + \frac{2 (\mu - 1) p + p (1 - p)(\mu - 1) + p^2 \sigma^2 + (\mu - 1)^2 p^2}{(1 - \mu p)^2} \vspace*{8pt} \\ && \hspace*{100pt} + \
    \displaystyle \frac{(p (1 - p) \mu + p^2 \sigma^2)(\mu - 1) p}{(1 - \mu p)^3} \bigg) \ q \bigg(\frac{1 - q^r}{1 - q} \bigg) \vspace*{8pt} \\ && \hspace*{5pt} + \
    \displaystyle \bigg(1 + \frac{(\mu - 1) p}{1 - \mu p} \bigg)^2 \ \frac{2q^2 (1 - rq^{r - 1} + (r - 1) q^r)}{(1 - q)^2}. \end{array} $$
\end{theorem}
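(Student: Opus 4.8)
The plan is to reuse the spine decomposition behind Theorem~\ref{th:first} and push it to second order. Write the unique path from the source~$x$ to the root as~$x = v_0, v_1, \dots, v_r$, where~$v_i$ is the ancestor of~$x$ at distance~$i$. Because~$\T$ is a tree, for every vertex~$y$ the self-avoiding path from~$x$ to~$y$ is unique, so~$y$ is wet if and only if every edge of that path is open in the direction of travel. This partitions~$\C$ exactly into the downward open cluster~$W_0$ rooted at~$x$ (the wet descendants of~$x$, including~$x$), and, for each~$i = 1, \dots, r$, the wet vertices whose path from~$x$ first climbs to~$v_i$ and then turns downward into a non-spine child of~$v_i$. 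Letting~$A_i$ be the event that the~$i$ upward edges~$v_0 \to v_1 \to \cdots \to v_i$ are all open (so~$\mathbf{1}_{A_i} = \prod_{j \le i} B_j$ with~$B_j$ i.i.d.\ Bernoulli($q$)), and letting~$U_i = 1 + \sum_c \chi_c W_c$ collect~$v_i$ together with the downward clusters~$W_c$ into its~$D_i - 1$ non-spine children~$c$ (with~$\chi_c$ i.i.d.\ Bernoulli($p$) the downward edges), I obtain the exact identity~$S = W_0 + \sum_{i=1}^r \mathbf{1}_{A_i} U_i$.

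The second step records the independence that makes the computation close. The downward edges and offspring counts governing~$W_0$ and each~$U_i$ live in pairwise disjoint subtrees, and the upward spine edges governing the~$A_i$ are disjoint from all of these; hence~$W_0, U_1, \dots, U_r$ are independent, each~$U_i$ is independent of the whole family~$\{\mathbf{1}_{A_j}\}$, and the~$U_i$ are i.i.d.\ (so I drop the index and write~$U$). The only dependence left is the nesting~$A_r \subset \cdots \subset A_1$, which gives~$E[\mathbf{1}_{A_i}] = q^i$ and, crucially, $\mathbf{1}_{A_i} \mathbf{1}_{A_k} = \mathbf{1}_{A_{\max(i,k)}}$, so that~$E[\mathbf{1}_{A_i} \mathbf{1}_{A_k}] = q^{\max(i,k)}$. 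Expanding the square and taking expectations then yields
$$ E_r(S^2) = E[W_0^2] + \big(2\,E[W_0]\,E[U] + E[U^2]\big) \sum_{i=1}^r q^i + E[U]^2 \sum_{i \ne k} q^{\max(i,k)}. $$

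The third step supplies the ingredients~$E[W_0^2]$, $E[U]$ and~$E[U^2]$. For a downward cluster~$W$ the branching property gives the distributional recursion~$W = 1 + \sum_{j=1}^D \chi_j W_j$ with~$D \sim (p_k)$, $\chi_j$ i.i.d.\ Bernoulli($p$), and~$W_j$ i.i.d.\ copies of~$W$; applying~$E[W] = 1 + \mu p\,E[W]$ recovers~$E[W_0] = (1 - \mu p)^{-1}$, and applying the random-sum variance identity~$\var(\sum_1^D Y_j) = \mu\,\var(Y) + \sigma^2 (E Y)^2$ to~$W^2 = (1 + \sum \chi_j W_j)^2$ gives a linear equation for~$E[W^2]$ whose solution is~$E[W_0^2] = (1 - \mu p^2 + \sigma^2 p^2)/(1 - \mu p)^3$, exactly the first group in the statement. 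The same recursion with~$D$ replaced by~$D - 1$ (one child sits on the spine, so the multiplier of~$\var(Y)$ becomes~$\mu - 1$ while the variance multiplier stays~$\sigma^2$) gives~$E[U] = (1-p)/(1 - \mu p) = 1 + (\mu-1)p/(1 - \mu p)$ and, after one more application of the random-sum identity, the value of~$E[U^2]$.

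Finally I would evaluate the two sums~$\sum_{i=1}^r q^i = q(1 - q^r)/(1 - q)$ and~$\sum_{i \ne k} q^{\max(i,k)} = 2 \sum_{k=1}^r (k-1) q^k = 2 q^2 (1 - r q^{r-1} + (r-1) q^r)/(1 - q)^2$, substitute the moments of the previous step, and collect terms; this reproduces the three displayed groups, with~$2\,E[W_0]\,E[U] + E[U^2]$ matching the bracketed coefficient of~$q(1 - q^r)/(1 - q)$ and~$E[U]^2 = (1 + (\mu-1)p/(1 - \mu p))^2$ matching the last coefficient. I expect the genuine content to sit in the first two steps---the exactness of the partition, which follows from uniqueness of paths in a tree, and the nesting identity~$\mathbf{1}_{A_i} \mathbf{1}_{A_k} = \mathbf{1}_{A_{\max(i,k)}}$, which is what produces the~$\sum (k-1) q^k$ term and is the one place where the non-independence of the upward ($q$) direction genuinely enters. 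Everything afterwards is bookkeeping together with the admittedly lengthy algebraic simplification needed to match the stated closed form.
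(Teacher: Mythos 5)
Your proposal is correct and follows essentially the same route as the paper: the identity $S = W_0 + \sum_{i=1}^r \mathbf{1}_{A_i} U_i$ is exactly the paper's partition of $\C$ into the subtrees $S(x_r)$ and $S(x_{r-i}\setminus x_{r-i+1})$, your sums $\sum_i q^i$ and $\sum_{i\neq k} q^{\max(i,k)}$ are precisely $E(D)$ and $E(D(D-1))$ from Lemma~\ref{lem:D} (since $\mathbf{1}_{A_i} = \mathbf{1}\{D\geq i\}$), and your moment computations for $W_0$ and $U$ reproduce Lemmas~\ref{lem:plus}--\ref{lem:variance}. The only cosmetic difference is that you carry the spine indicators explicitly and exploit $\mathbf{1}_{A_i}\mathbf{1}_{A_k} = \mathbf{1}_{A_{\max(i,k)}}$, whereas the paper conditions on the highest wet ancestor $D$; these are equivalent.
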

 Combining Theorems~\ref{th:first} and~\ref{th:second} implies that, in the subcritical phase, the tail distribution of the size of the cluster of wet vertices has a quadratic decay:
 there exists~$C = C (\mu p) < \infty$ such that
 $$ P (S > n) \leq C / n^2 \quad \hbox{for all} \quad n > 0. $$
 Indeed, according to the theorems, both the first and second moments are finite when~$\mu p < 1$, therefore it follows from Chebyshev's inequality that
 $$ P (S > n) \leq P (|S - E (S)| > |n - E (S)|) \leq \frac{\var (S)}{(n - E (S))^2} \leq \frac{E (S^2)}{(n - E (S))^2} $$
 for all~$n > E (S)$, which shows quadratic decay.
\newpage
\noindent {\bf Exponential decay of the diameter.}
 Another quantity of interest which also accounts for the geometry of the set of wet vertices is the diameter of the cluster~$\C$ defined as the maximum graph distance between any two wet vertices:
 $$ \diam (\C) = \max \,\{d (x, y) : x, y \in \C \}. $$
 In this case, studying the spread of information from a dynamical point of view starting from the highest wet vertex and moving one generation down the tree at each time step, we can prove an exponential decay.
 More precisely, we have the following theorem.
\begin{theorem}-- \
\label{th:decay}
 Let~$\mu p < 1$.
 Then, the conditional probability that the diameter is larger than~$2n$ given that the information starts at distance~$r$ from the root is
 $$ P_r (\diam (\C) \geq 2n) \leq \frac{1 - (q / \mu p)^{r + 1}}{1 - (q / \mu p)} \ (\mu p)^n \quad \hbox{for all} \quad n > r. $$
\end{theorem}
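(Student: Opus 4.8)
The plan is to bound the two-point diameter event by a one-point event and then to control the latter by a first-moment estimate on the downward ($p$-)spread. First I would reduce the diameter to the existence of a single far-away wet vertex. If $y_1, y_2 \in \C$ realize the diameter, then the triangle inequality gives
$$ 2n \leq d (y_1, y_2) \leq d (x, y_1) + d (x, y_2), $$
so at least one of the two wet vertices lies at distance at least~$n$ from the source~$x$. Hence
$$ P_r (\diam (\C) \geq 2n) \leq P_r (\exists \, y \in \C : d (x, y) \geq n). $$

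Next I would decompose according to the shape of the (unique) path from~$x$ to such a~$y$. In the tree this path first goes up~$s$ steps from~$x$ to the common ancestor~$x_s$ (using offspring~$\to$~parent arrows) and then down~$t = d (x, y) - s$ steps to~$y$ (using parent~$\to$~offspring arrows); since~$x$ sits at distance~$r$ from the root we have~$0 \leq s \leq r$. For~$y$ to be wet, the~$s$ upward arrows must be open (probability~$q^s$) and there must be an open downward $p$-path of length~$t \geq n - s$ issued from~$x_s$. Writing~$A_s$ for this event and taking a union bound over~$s \in \{0, \ldots, r\}$ gives
$$ P_r (\exists \, y \in \C : d (x, y) \geq n) \leq \sum_{s = 0}^{r} P (A_s). $$

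The key point is that the~$s$ upward (offspring~$\to$~parent) arrows along~$x \to x_s$ and the downward (parent~$\to$~offspring) arrows used by the $p$-percolation issued from~$x_s$ form disjoint families, which are independent by the model's independence of the two arrows attached to each edge; therefore~$P (A_s) \leq q^s \, P (\hbox{the $p$-cluster of } x_s \hbox{ reaches depth } n - s)$. Since~$n > r \geq s$ we have~$n - s \geq 1$, and an open downward path of length at least~$n - s$ forces an open vertex at depth exactly~$n - s$; by Markov's inequality this probability is at most the expected number of such vertices, which equals~$(\mu p)^{n - s}$ because the $d$-th generation of the branching process thinned by retaining each parent~$\to$~offspring arrow with probability~$p$ has mean~$(\mu p)^d$. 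Substituting and summing the geometric series yields
$$ P_r (\diam (\C) \geq 2n) \leq \sum_{s = 0}^{r} q^s (\mu p)^{n - s} = (\mu p)^n \sum_{s = 0}^{r} \Big( \frac{q}{\mu p} \Big)^s = \frac{1 - (q / \mu p)^{r + 1}}{1 - (q / \mu p)} \, (\mu p)^n, $$
which is exactly the claimed bound.

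I expect the delicate points to be bookkeeping rather than conceptual. The first is justifying cleanly the independence in~$P (A_s)$ and noting that over-counting the whole downward generation of~$x_s$ (including the branch leading back toward~$x$) only loosens the inequality, so the crude first moment~$(\mu p)^{n - s}$ is legitimate. The second is verifying the elementary generation-size identity together with the passage from ``a downward open path of length~$\geq n - s$'' to ``an open vertex at depth exactly~$n - s$'', which is where the restriction~$n > r$ (ensuring~$n - s \geq 1$) is used. Subcriticality~$\mu p < 1$ plays no role in deriving the inequality itself but is what turns the finite bound into genuine exponential decay in~$n$.
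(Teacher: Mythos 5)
Your proof is correct, and it reaches the bound by a genuinely different route than the paper. The paper's argument is dynamical: on the event $D = k$ it tracks $X_n$, the number of wet vertices at distance $n$ from the highest wet vertex $x_{r-k}$, derives the one-step recursion $E_r(X_{n+1}) = \mu p\, E_r(X_n) + (1-p)q^{n+1}$ for $n < r$ (the correction term coming from the spine vertices that are conditioned to be wet), unrolls it, and finishes with Markov's inequality. Your argument is static: a union bound over the peak $x_s$ of the geodesic from the source to a far wet vertex, independence of the upward ($q$) and downward ($p$) arrow families, and a first-moment bound $(\mu p)^{n-s}$ on each downward branch. Both are first-moment arguments, and both collapse to the identical sum $\sum_{s=0}^{r} q^s (\mu p)^{n-s}$, so the final constants agree exactly. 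Your version is shorter and avoids the conditioning on $D$ and the bookkeeping of the spine vertex having $(\mu-1)p+1$ wet offspring on average; the paper's version yields the exact value of $E_r(X_n)$ along the way and matches the ``dynamical'' viewpoint the authors advertise. The one step you should spell out is the reduction of wetness to the geodesic: a priori $y \in \C$ only means \emph{some} directed open path from $x$ to $y$ exists, but on a tree any such path must cross each edge of the unique $x$--$y$ geodesic at least once in the forward direction (the forward crossings of a separating edge exceed the backward ones by one), so all $s$ upward arrows and all $t$ downward arrows along the geodesic are forced to be open. With that observation recorded, your decomposition, the independence claim, the over-counting remark, and the geometric summation are all sound.
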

 The theorem indeed implies that, in the subcritical phase~$\mu p < 1$, the tail distribution of the diameter of the cluster of wet vertices, i.e., the maximum distance between any two traders who learn about the information, decays exponentially.

%%%%%%%%%%%%%%%%%%%%%%%%%%%%%%%%%%%%%%%%%%%%%%%%%%%%%%%%%%%%%%%%%%%%%%%%%%%%%%%%%%%%%%%%%%%%%%%%%%%%%%%%%%%%%%%%%%%%%%%%%%%%%%%%%%%%%%%%%%%%%%%%%%%%%%%%%%%%%%%%%%%%%%%%%%%%

\section{Partition into disjoint subtrees}
 To get ready for the proofs of Theorems~\ref{th:first} and~\ref{th:second} in the next two sections, we first explain and study the partition of the set of wet vertices into disjoint subtrees.
 More precisely, we first study the distribution of the random number of subtrees and compute the first and second moment of the size of these subtrees.
 From now on, we assume that the information starts at a vertex~$x$ with~$d (0, x) = r$.
 By spherical symmetry, the specific choice of~$x$ is unimportant as long as the vertex is at distance~$r$ from the root.
 There is a unique directed path
 $$ x_0 = 0 \to x_1 \to x_2 \to \cdots \to x_{r - 1} \to x_r = x $$
 of length~$r$ going from the root to vertex~$x$ and we let
 $$ D = \max \,\{i = 0, 1, \ldots, r : x_{r - i} \ \hbox{is wet} \}. $$
 This is the distance between the source of the information and the highest wet vertex, and we refer the reader to Figure~\ref{fig:tree} for a picture.
 The following lemma gives preliminary results about the random variable~$D$ that will be useful later to prove the theorems.
\begin{figure}[t!]
\centering
\scalebox{0.65}{\input{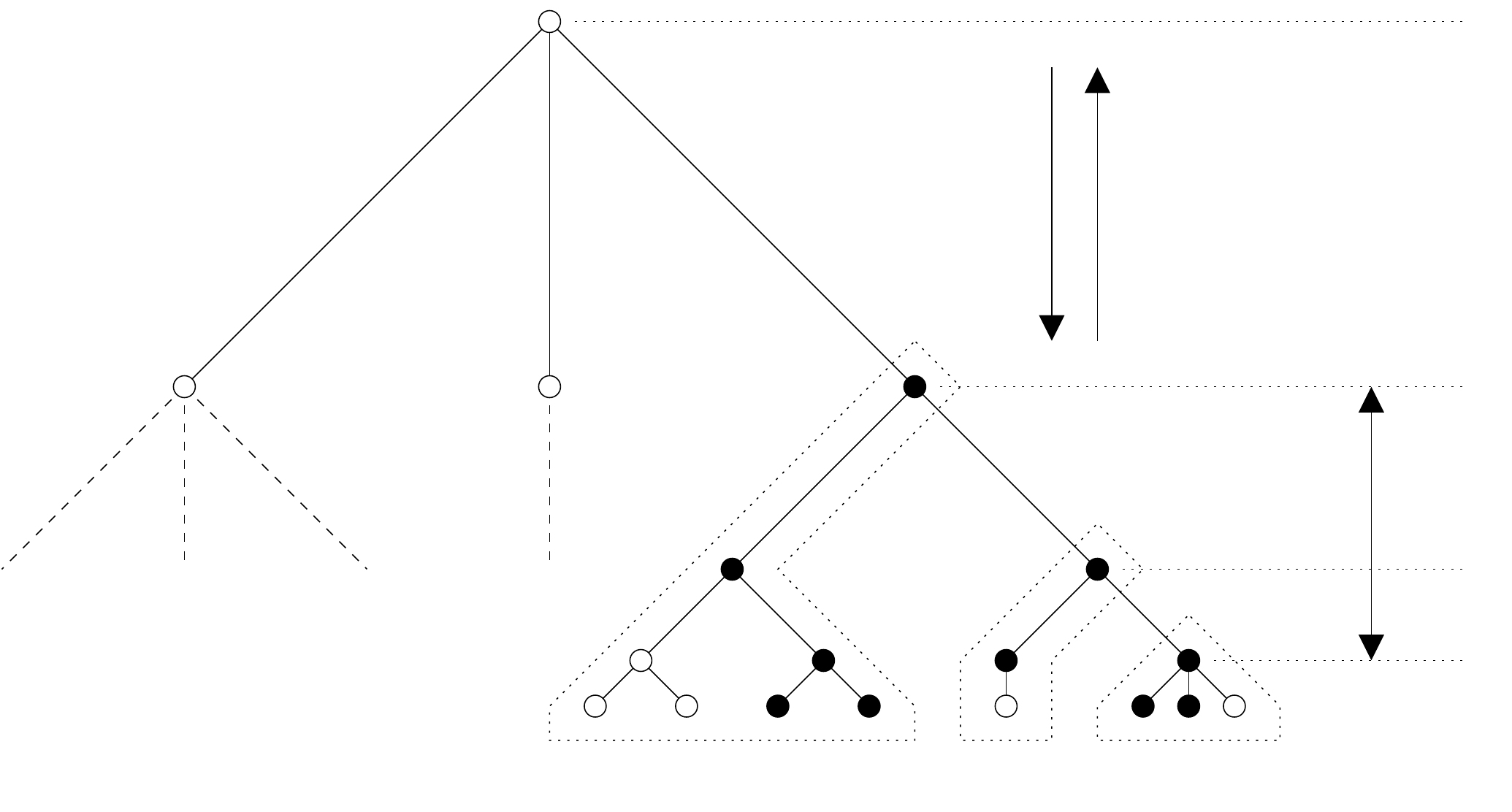_t}}
\caption{\upshape{Picture of the partition into disjoint subtrees used to prove Theorems~\ref{th:first} and~\ref{th:second}.
 The black vertices represent the set of wet vertices while the white vertices are not wet.
 In our example, the information starts from~$x_3$ and spreads up to~$x_1$, which results in a partition of the cluster of wet vertices into three disjoint subtrees.
 Starting from the source of the information, the numbers of wet vertices in the subtrees are~3, 2 and 5, respectively.}}
\label{fig:tree}
\end{figure}
\begin{lemma}-- \
\label{lem:D}
 We have
 $$ E (D) = q \bigg(\frac{1 - q^r}{1 - q} \bigg) \quad \hbox{and} \quad E (D (D - 1)) = \frac{2q^2 (1 - rq^{r - 1} + (r - 1) q^r)}{(1 - q)^2}. $$
\end{lemma}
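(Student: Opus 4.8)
The plan is to reduce both identities to the tail distribution of the single random variable~$D$, and the key observation is that this distribution is geometric up to truncation at~$r$. Since~$\T$ is a tree, the only way the information injected at~$x = x_r$ can reach an ancestor~$x_{r - i}$ is by climbing straight up the path~$x_r \to x_{r - 1} \to \cdots \to x_{r - i}$, which requires each of the~$i$ edges along this segment to be open in the offspring-to-parent (toward the root) direction. Because these arrows are open independently with probability~$q$, the event~$\{D \geq i\}$ coincides exactly with the event that all~$i$ of them are open, so that
$$ P (D \geq i) = q^i \quad \hbox{for} \quad 0 \leq i \leq r, $$
while~$P (D \geq i) = 0$ for~$i > r$ because the root is at distance~$r$ from~$x$.

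Given this tail, the first moment follows immediately from the tail-sum formula for nonnegative integer-valued random variables. First I would write
$$ E (D) = \sum_{i = 1}^{\infty} P (D \geq i) = \sum_{i = 1}^{r} q^i, $$
and then sum the finite geometric series to obtain the first identity.

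For the factorial moment, I would use the analogous identity~$E (D (D - 1)) = 2 \sum_{k \geq 2} (k - 1) P (D \geq k)$, which holds for any nonnegative integer-valued random variable and follows from~$\binom{n}{2} = \sum_{k = 2}^{n} (k - 1)$ together with Fubini. Plugging in the tail above gives
$$ E (D (D - 1)) = 2 \sum_{k = 2}^{r} (k - 1) \, q^k = 2 q \sum_{j = 1}^{r - 1} j \, q^j, $$
after the substitution~$j = k - 1$. The remaining step is to evaluate the truncated arithmetico-geometric sum, for instance by differentiating the finite geometric series~$\sum_{j = 0}^{r - 1} q^j$ with respect to~$q$, which yields
$$ \sum_{j = 1}^{r - 1} j \, q^j = \frac{q \,(1 - r q^{r - 1} + (r - 1) q^r)}{(1 - q)^2}. $$
Substituting this back produces the second identity.

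The only delicate point is the bookkeeping in this truncated sum: one must keep track of the exact boundary terms~$r q^{r - 1}$ and~$(r - 1) q^r$ so that the final expression matches the stated formula. Everything else is an immediate consequence of the geometric structure of~$D$, so I expect no conceptual obstacle, only the routine care needed for the arithmetico-geometric series.
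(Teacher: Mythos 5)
Your proposal is correct. The key probabilistic input is the same as in the paper: since the only route from $x = x_r$ to the ancestor $x_{r-i}$ is straight up the path, $D$ is a geometric-type variable truncated at $r$, and indeed $P(D \geq i) = q^i$ for $0 \leq i \leq r$ is equivalent to the paper's mass function $P(D = k) = q^k(1-q)$ for $k < r$, $P(D = r) = q^r$. Where you differ is in the moment computation: the paper sums directly against the mass function, which forces it to carry the boundary atom $rq^r$ (respectively $r(r-1)q^r$) as a separate term and to differentiate the finite geometric series twice, with somewhat heavy bookkeeping in the second-derivative step. Your tail-sum identities $E(D) = \sum_{i \geq 1} P(D \geq i)$ and $E(D(D-1)) = 2\sum_{k \geq 2}(k-1)P(D \geq k)$ absorb the atom at $r$ automatically and reduce the factorial moment to a single arithmetico-geometric sum, which is exactly the first-derivative identity the paper already uses for $E(D)$. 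Both routes land on the same closed forms (I checked that $2q \cdot q(1 - rq^{r-1} + (r-1)q^r)/(1-q)^2$ matches the stated expression); yours is a little shorter and less error-prone, the paper's is more self-contained in that it never invokes the tail-sum formulas.
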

\begin{proof}
 Because the information spreads towards the root of the random tree with probability~$q$ independently through each of the edges, we have
 $$ P (D = k) = q^k (1 - q) \quad \hbox{for} \ k = 0, 1, \ldots, r - 1, \quad \hbox{and} \quad P (D = r) = q^r. $$
 In particular, using that
 $$ \sum_{k = 1}^{r - 1} \,k x^{k - 1} =
    \frac{\partial}{\partial x} \bigg(\sum_{k = 0}^{r - 1} x^k \bigg) =
    \frac{\partial}{\partial x} \bigg(\frac{1 - x^r}{1 - x} \bigg) = \frac{1 - rx^{r - 1} + (r - 1) x^r}{(1 - x)^2}, $$
 we deduce that the first moment is given by
 $$ \begin{array}{rcl}
      E (D) & \n = \n &
    \displaystyle \sum_{k = 0}^{r - 1} \,k q^k (1 - q) + r q^r = q (1 - q) \sum_{k = 1}^{r - 1} \,k q^{k - 1} + r q^r \vspace*{4pt} \\ & \n = \n &
    \displaystyle q (1 - q) \bigg(\frac{1 - rq^{r - 1} + (r - 1) q^r}{(1 - q)^2} \bigg) + r q^r = q \bigg(\frac{1 - q^r}{1 - q} \bigg). \end{array} $$
 Similarly, we have
 $$ \begin{array}{rcl}
    \displaystyle \sum_{k = 2}^{r - 1} \,k (k - 1) x^{k - 2} & \n = \n &
    \displaystyle \frac{\partial^2}{\partial x^2} \bigg(\sum_{k = 0}^{r - 1} x^k \bigg) =
    \displaystyle \frac{\partial}{\partial x} \bigg(\frac{1 - rx^{r - 1} + (r - 1) x^r}{(1 - x)^2} \bigg) \vspace*{8pt} \\ & \n = \n &
    \displaystyle \frac{2 - r (r - 1) x^{r - 2} + 2r (r - 2) x^{r - 1} - (r - 1)(r - 2) x^r}{(1 - x)^3} \end{array} $$
 from which it follows that
 $$ \begin{array}{rcl}
    \displaystyle E (D (D - 1)) & \n = \n &
    \displaystyle \sum_{k = 0}^{r - 1} \,k (k - 1) q^k (1 - q) + r (r - 1) q^r \vspace*{4pt} \\ & \n = \n &
    \displaystyle q^2 (1 - q) \ \frac{2 - r (r - 1) q^{r - 2} + 2r (r - 2) q^{r - 1} - (r - 1)(r - 2) q^r}{(1 - q)^3} \vspace*{4pt} \\ && \hspace*{50pt} + \
    \displaystyle q^2 \ \frac{r (r - 1) q^{r - 2} - 2r (r - 1) q^{r - 1} + r (r - 1) q^r}{(1 - q)^2} \vspace*{4pt} \\ & \n = \n &
    \displaystyle \frac{2q^2 (1 - rq^{r - 1} + (r - 1) q^r)}{(1 - q)^2}. \end{array} $$
 This completes the proof.
\end{proof} \\ \\
 Next, we define the random variables
\begin{equation}
\label{eq:subtrees}
  \begin{array}{rcl}
    S (x_j) & \n = \n & \hbox{number of wet vertices in the subtree starting at~$x_j$} \vspace*{4pt} \\
    S (x_j \setminus x_{j + 1}) & \n = \n & \hbox{number of wet vertices in the subtree starting at~$x_j$ but} \\ &&
                                             \hbox{excluding the subtree starting at~$x_{j + 1}$.} \end{array}
\end{equation}
 For instance, in the realization shown in Figure~\ref{fig:tree}, we have
 $$ S (x_3) = 3, \quad S (x_2 \setminus x_3) = 2, \quad S (x_1 \setminus x_2) = 5, \quad S (x_0 \setminus x_1) = 0. $$
 To estimate the first and second moments of~$S$ later, we now compute the first and second moments of the random variables in~\eqref{eq:subtrees}.
 To do this, let~$\xi_i = \bernoulli (p)$ be independent, and let~$Y$ be the offspring distribution, i.e., the random variable describing the random number of edges starting from a given vertex and going away from the root.
 In particular,
 $$ X_+ = \xi_1 + \xi_2 + \cdots + \xi_Y \quad \hbox{and} \quad X_- = \xi_1 + \xi_2 + \cdots + \xi_{Y - 1} $$
 are the random variables describing the number of wet offspring of a given vertex and the number of wet offspring of a given vertex excluding a given offspring, respectively.
 The next lemma gives the first and second moments of the first random variable in~\eqref{eq:subtrees}.
\begin{lemma}-- \
\label{lem:plus}
 Let~$\nu_+ = E (X_+)$ and~$\Sigma_+^2 = \var (X_+)$. Then,
 $$ \begin{array}{rcl}
     E (S (x_j)) & \n = \ & \displaystyle \frac{1 - \nu_+^{R - j + 1}}{1 - \nu_+} \vspace*{4pt} \\
     E (S (x_j)^2) & \n = \n & \displaystyle \frac{\Sigma_+^2}{(1 - \nu_+)^2} \ \bigg(\frac{1 - \nu_+^{2 (R - j) + 1}}{1 - \nu_+} - (2 (R - j) + 1) \nu_+^{R - j} \bigg) + \bigg(\frac{1 - \nu_+^{R - j + 1}}{1 - \nu_+} \bigg)^2. \end{array} $$
\end{lemma}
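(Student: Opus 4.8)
The plan is to recognize $S (x_j)$ as the total progeny of a branching process truncated at a finite depth and then to set up and solve two first-order recursions, one for the mean and one for the variance. Since $x_j$ sits at distance~$j$ from the root and the tree has radius~$R$, the subtree rooted at~$x_j$ extends at most~$R - j$ generations downward, so I would set $m = R - j$ and write $T_m = S (x_j)$. Each vertex of this subtree has a number of wet offspring distributed as~$X_+$, independently across vertices, so conditioning on the wet offspring of~$x_j$ yields the distributional identity
$$ T_m = 1 + \sum_{i = 1}^{X_+} T_{m - 1}^{(i)}, $$
where the~$T_{m - 1}^{(i)}$ are independent copies of~$T_{m - 1}$, independent of~$X_+$, and $T_0 = 1$.

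Taking expectations and writing $a_m = E (T_m)$ gives the linear recursion $a_m = 1 + \nu_+ a_{m - 1}$ with $a_0 = 1$, whose solution is the geometric sum $a_m = (1 - \nu_+^{m + 1}) / (1 - \nu_+)$; substituting $m = R - j$ produces the stated first moment. For the second moment I would pass through the variance $v_m = \var (T_m)$. Applying the formula for the variance of a random sum to the recursion above, with~$X_+$ independent of the i.i.d.\ summands of mean~$a_{m - 1}$ and variance~$v_{m - 1}$, and noting that adding the constant~$1$ does not affect the variance, gives
$$ v_m = \nu_+ \,v_{m - 1} + \Sigma_+^2 \,a_{m - 1}^2, \qquad v_0 = 0. $$

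This is again a first-order linear recursion, now with the known inhomogeneous term $\Sigma_+^2 a_{m - 1}^2$, and unwinding it yields the explicit sum
$$ v_m = \Sigma_+^2 \sum_{i = 0}^{m - 1} \nu_+^{m - 1 - i} \,a_i^2 = \frac{\Sigma_+^2}{(1 - \nu_+)^2} \sum_{i = 0}^{m - 1} \nu_+^{m - 1 - i} (1 - \nu_+^{i + 1})^2. $$
The main obstacle is purely computational: expanding the square and splitting the sum into three geometric pieces, which evaluate to $(1 - \nu_+^m) / (1 - \nu_+)$, to $-2m \nu_+^m$, and to $\nu_+^{m + 1} (1 - \nu_+^m) / (1 - \nu_+)$, respectively. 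Recombining these shows that the bracketed factor equals $(1 - \nu_+^{2m + 1}) / (1 - \nu_+) - (2m + 1) \nu_+^m$, so that~$v_m$ matches the first term in the claimed expression for $E (S (x_j)^2)$. Finally, $E (S (x_j)^2) = v_m + a_m^2$ adds back the square of the first moment, and substituting $m = R - j$ gives the stated formula.
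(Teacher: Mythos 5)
Your proof is correct: the recursion $T_m = 1 + \sum_{i=1}^{X_+} T_{m-1}^{(i)}$ with $T_0 = 1$, the mean recursion $a_m = 1 + \nu_+ a_{m-1}$, and the variance recursion $v_m = \nu_+ v_{m-1} + \Sigma_+^2 a_{m-1}^2$ from the random-sum variance formula all hold, and the three geometric sums recombine exactly as you claim to give the stated bracketed factor. The paper itself omits the proof and cites \cite[Lemmas~8 and~9]{jevtic_lanchier_2020}, which proceed by essentially this same generation-by-generation recursion on the truncated branching process, so your argument is the expected one and has the added merit of being self-contained.
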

 For a detailed proof of this lemma, we refer to~\cite[Lemmas~8 and 9]{jevtic_lanchier_2020}.
 We now compute the first and second moments of the second set of random variables in~\eqref{eq:subtrees}.
\begin{lemma}-- \
\label{lem:minus}
 Let~$\nu_- = E (X_-)$ and~$\Sigma_-^2 = \var (X_-)$. Then,
 $$ \begin{array}{rcl}
      E (S (x_j \setminus x_{j + 1})) & \n = \n & 1 + \nu_- E (S (x_{j + 1})) \vspace*{4pt} \\
      E (S (x_j \setminus x_{j + 1})^2) & \n = \n & 1 + 2 \nu_- E (S (x_{j + 1})) + \nu_- E (S (x_{j + 1})^2) + (\Sigma_-^2 + \nu_-^2 - \nu_-) E (S (x_{j + 1}))^2. \end{array} $$
\end{lemma}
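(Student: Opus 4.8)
The plan is to represent $S(x_j \setminus x_{j+1})$ as a compound (random) sum and then read off the two moments from the standard identities for such sums. By the very definition of the partition, the subtree rooted at $x_j$ with the subtree through $x_{j+1}$ deleted consists of the vertex $x_j$ itself together with the complete downward subtrees hanging below each wet offspring of $x_j$ \emph{other than} $x_{j+1}$. The number of such offspring is, by construction, distributed as $X_-$, and each of them sits at distance $j+1$ from the root, so I would write
$$ S(x_j \setminus x_{j+1}) = 1 + \sum_{i = 1}^{X_-} T_i, $$
where $T_1, T_2, \ldots$ are independent copies of $S(x_{j+1})$ that are also independent of $X_-$. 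This identity is the crux, and the main point to justify is precisely the distributional claim $T_i \sim S(x_{j+1})$ together with the mutual independence of $X_-$ and the $T_i$; both follow from the fact that, in a Galton-Watson tree carrying independent bond percolation, the offspring counts and edge states attached to distinct vertices are mutually independent, so the number of wet side-offspring of $x_j$ and the sizes of the subtrees they generate do not interact.

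With this representation, the first moment is immediate from Wald's identity: conditioning on $X_-$ and using independence gives
$$ E(S(x_j \setminus x_{j+1})) = 1 + E(X_-) \, E(S(x_{j+1})) = 1 + \nu_- \, E(S(x_{j+1})), $$
which is the first claimed formula.

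For the second moment, I would set $W = \sum_{i=1}^{X_-} T_i$, so that $S(x_j \setminus x_{j+1}) = 1 + W$ and $E(S(x_j \setminus x_{j+1})^2) = 1 + 2 E(W) + E(W^2)$, the middle term being already known. To handle $E(W^2)$ I would invoke the Blackwell-Girshick formula for the variance of a random sum,
$$ \var(W) = E(X_-) \, \var(T_1) + \var(X_-) \, E(T_1)^2 = \nu_- \, \var(S(x_{j+1})) + \Sigma_-^2 \, E(S(x_{j+1}))^2, $$
and then recover $E(W^2) = \var(W) + E(W)^2$. Substituting $\var(S(x_{j+1})) = E(S(x_{j+1})^2) - E(S(x_{j+1}))^2$ and collecting the coefficient of $E(S(x_{j+1}))^2$ produces exactly $\Sigma_-^2 + \nu_-^2 - \nu_-$, so that assembling $1 + 2E(W) + E(W^2)$ yields the stated expression.

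The algebra is routine and the two compound-sum identities do all the work; the only step requiring genuine care is the independence argument that licenses the conditioning, namely that the side-subtree sizes are i.i.d. copies of $S(x_{j+1})$ independent of the wet side-offspring count $X_-$. Once that is established, both moments drop out directly, with Lemma~\ref{lem:plus} supplying the inputs $E(S(x_{j+1}))$ and $E(S(x_{j+1})^2)$.
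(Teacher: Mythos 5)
Your proof is correct and follows essentially the same route as the paper: the identical decomposition $S(x_j \setminus x_{j+1}) = 1 + \sum_{i=1}^{X_-} T_i$ into the root vertex plus i.i.d.\ subtree sizes, with the first moment obtained by conditioning on $X_-$ (Wald) and the second moment from the compound-sum moment identities. The only cosmetic difference is that you invoke the Blackwell--Girshick variance formula where the paper expands the square $\bigl(\sum_n S(y_n)\bigr)^2$ directly and conditions on $X_-$, which amounts to re-deriving that same formula inline.
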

\begin{proof}
 Letting~$y_1, y_2, \ldots, y_X$ be the wet offspring of~$x_j$ other than~$x_{j + 1}$,
\begin{equation}
\label{eq:partition}
 S (x_j \setminus x_{j + 1}) = 1 + \sum_{n = 1}^X \,S (y_n) \quad \hbox{and} \quad X \overset{d}{=} X_-.
\end{equation}
 In particular, conditioning on~$X$, we get
 $$ E (S (x_j \setminus x_{j + 1})) = E \bigg(E \bigg(1 + \sum_{n = 1}^X \,S (y_n) \,\Big| \,X \bigg) \bigg) = 1 + E (X) E (S (y_n)) = 1 + \nu_- E (S (x_{j + 1})). $$
 Taking the square in~\eqref{eq:partition}
 $$ \begin{array}{rcl}
      S (x_j \setminus x_{j + 1})^2 & \n = \n &
    \displaystyle 1 + 2 \,\sum_{n = 1}^X S (y_n) + \bigg(\sum_{n = 1}^X S (y_n) \bigg)^2 \vspace*{4pt} \\ & \n = \n & 
    \displaystyle 1 + 2 \,\sum_{n = 1}^X S (y_n) + \sum_{n = 1}^X S (y_n)^2 + \sum_{n \neq m} S (y_n) S (y_m)  \end{array} $$
 then conditioning on~$X$ and using independence,
 $$ \begin{array}{rcl}
     E (S (x_j \setminus x_{j + 1})^2) & \n = \n &
     E (E (S (x_j \setminus x_{j + 1})^2 \,| \,X)) \vspace*{4pt} \\ & \n = \n &
     1 + 2 E (X) E (S (y_n)) + E (X) E (S (y_n)^2) + E (X (X - 1)) E (S (y_n))^2 \vspace*{4pt} \\ & \n = \n &
     1 + 2 \nu_- E (S (x_{j + 1})) + \nu_- E (S (x_{j + 1})^2) + (\Sigma_-^2 + \nu_-^2 - \nu_-) E (S (x_{j + 1}))^2. \end{array} $$
 This completes the proof.
\end{proof}

%%%%%%%%%%%%%%%%%%%%%%%%%%%%%%%%%%%%%%%%%%%%%%%%%%%%%%%%%%%%%%%%%%%%%%%%%%%%%%%%%%%%%%%%%%%%%%%%%%%%%%%%%%%%%%%%%%%%%%%%%%%%%%%%%%%%%%%%%%%%%%%%%%%%%%%%%%%%%%%%%%%%%%%%%%%%

\section{Proof of Theorem \ref{th:first} (first moment)}
 Throughout this section, we assume that the network of interactions consists of the truncated Galton-Watson tree with radius~$R$ obtained by removing from the infinite tree all the vertices at distance more than~$R$ from the root.
 To prove Theorem~\ref{th:first}, the first step is to compute the mean of the random variables~$X_+$ and~$X_-$ defined above, which is done in the next lemma.
\begin{lemma}-- \
\label{lem:mean}
 For~$\nu_{\pm}$ defined as in Lemmas~\ref{lem:plus} and~\ref{lem:minus}, we have~$\nu_+ = \mu p$ and~$\nu_- = (\mu - 1) p$.
\end{lemma}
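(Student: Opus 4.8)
The plan is to compute both means by conditioning on the offspring count $Y$ and exploiting the independence of the edge-opening variables $\xi_i$ from the tree structure. Since $X_+ = \xi_1 + \cdots + \xi_Y$ is a random sum of independent $\bernoulli(p)$ random variables with the $\xi_i$ independent of $Y$, I would first compute the conditional expectation $E (X_+ \mid Y) = Y E (\xi_1) = Y p$, using linearity together with $E (\xi_i) = p$. Taking expectations and applying the tower property then gives $E (X_+) = E (E (X_+ \mid Y)) = p \, E (Y) = \mu p$, since $E (Y) = \mu$ by the definition of the offspring mean.

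The computation for $\nu_-$ is identical, the only change being that the sum now runs up to $Y - 1$ rather than $Y$. Conditioning on $Y$ gives $E (X_- \mid Y) = (Y - 1) p$, so that $E (X_-) = p \, E (Y - 1) = p (\mu - 1) = (\mu - 1) p$. In both cases this is simply Wald's identity applied to the two random sums.

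There is no serious obstacle here, as the argument reduces to a one-line conditioning computation for each quantity. The only point worth stating carefully is the independence of the family $(\xi_i)$ from the offspring number $Y$, which is guaranteed by the modeling assumption that each edge is declared open independently of the realization of the Galton-Watson tree; once this is in place, the tower property delivers both identities directly.
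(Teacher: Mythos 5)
Your proof is correct and follows essentially the same route as the paper: condition on the offspring count $Y$, use $E(X_\pm \mid Y) = Yp$ (resp.\ $(Y-1)p$), and apply the tower property to get $\mu p$ and $(\mu - 1)p$. The remark about the independence of the $(\xi_i)$ from $Y$ is a reasonable point of care, and the identification with Wald's identity is an accurate, if optional, observation.
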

\begin{proof}
 Recalling the definition of~$X_{\pm}$ and~$Y$, and conditioning on~$Y$,
 $$ \begin{array}{rcl}
    \nu_+ & \n = \n & E (X_+) = E (E (X_+ \,| \,Y)) = E (E (\xi_1 + \cdots + \xi_Y \,| \,Y)) = E (Y) E (\xi_n) = \mu p \vspace*{4pt} \\
    \nu_- & \n = \n & E (X_-) = E (E (X_- \,| \,Y)) = E (E (\xi_1 + \cdots + \xi_{Y - 1} \,| \,Y)) = E (Y - 1) E (\xi_n) = (\mu - 1) p. \end{array} $$
 This completes the proof.
\end{proof} \\ \\
 Using the previous lemmas, we are now ready to prove Theorem~\ref{th:first}.
 To begin with, observe that, on the event~$D = k$, the total number of wet vertices can be written as
\begin{equation}
\label{eq:size}
  S = S (x_r) + \sum_{i = r - k}^{r - 1} S (x_i \setminus x_{i + 1}) = S (x_r) + \sum_{i = 1}^k \,S (x_{r - i} \setminus x_{r - i + 1}).
\end{equation}
 In particular, conditioning on~$D$ and using Lemma~\ref{lem:minus}, we get
\begin{equation}
\label{eq:thm1a}
  \begin{array}{rcl} E_r (S) & \n = \n &
  \displaystyle \sum_{k = 0}^r \,E (S \,| \,D = k) \,P (D = k) \vspace*{4pt} \\ & \n = \n &
  \displaystyle \sum_{k = 0}^r \,\bigg(E (S (x_r)) + \sum_{i = 1}^k \,E (S (x_{r - i} \setminus x_{r - i + 1})) \bigg) P (D = k) \vspace*{4pt} \\ & \n = \n &
  \displaystyle \sum_{k = 0}^r \,\bigg(E (S (x_r)) + \sum_{i = 0}^{k - 1} \ (1 + \nu_- E (S (x_{r - i})) \bigg) P (D = k) \vspace*{4pt} \\ & \n = \n &
  \displaystyle E (S (x_r)) + E (D) + \nu_- \ \sum_{k = 1}^r \ \sum_{i = 0}^{k - 1} \,E (S (x_{r - i})) \,P (D = k). \end{array}
\end{equation}
 Exchanging the two sums, and using Lemma~\ref{lem:plus}, we obtain
\begin{equation}
\label{eq:thm1b}
  \begin{array}{l}
  \displaystyle \sum_{k = 1}^r \ \sum_{i = 0}^{k - 1} \,E (S (x_{r - i})) \,P (D = k) =
  \displaystyle \sum_{i = 0}^{r - 1} \ \sum_{k = i + 1}^r \,E (S (x_{r - i})) \,P (D = k) \vspace*{4pt} \\ \hspace*{40pt} =
  \displaystyle \sum_{i = 0}^{r - 1} \,E (S (x_{r - i})) \,P (D > i) =
  \displaystyle \sum_{i = 0}^{r - 1} \,q^{i + 1} \,E (S (x_{r - i})) \vspace*{8pt} \\ \hspace*{40pt} =
  \displaystyle \sum_{i = 0}^{r - 1} \,q^{i + 1} \bigg(\frac{1 - \nu_+^{R - r + i + 1}}{1 - \nu_+} \bigg) =
  \displaystyle \frac{1}{1 - \nu_+} \bigg(q \ \sum_{i = 0}^{r - 1} \,q^i - q \,\nu_+^{R - r + 1} \ \sum_{i = 0}^{r - 1} \,(q \nu_+)^i \bigg) \vspace*{8pt} \\ \hspace*{40pt} =
  \displaystyle \frac{q}{1 - \nu_+} \bigg(\bigg(\frac{1 - q^r}{1 - q} \bigg) - \nu_+^{R - r + 1} \bigg(\frac{1 - (q \nu_+)^r}{1 - q \nu_+} \bigg) \bigg). \end{array}
\end{equation}
 Combining~\eqref{eq:thm1a} and~\eqref{eq:thm1b}, and using Lemmas~\ref{lem:D} and~\ref{lem:plus}, we deduce that
 $$ \begin{array}{rcl} E_r (S) & \n = \n &
    \displaystyle \frac{1 - \nu_+^{R - r + 1}}{1 - \nu_+} + q \bigg(\frac{1 - q^r}{1 - q} \bigg) + \frac{q \nu_-}{1 - \nu_+} \bigg(\bigg(\frac{1 - q^r}{1 - q} \bigg) - \nu_+^{R - r + 1} \bigg(\frac{1 - (q \nu_+)^r}{1 - q \nu_+} \bigg) \bigg) \vspace*{8pt} \\ & \n = \n &
    \displaystyle \frac{1 - \nu_+^{R - r + 1}}{1 - \nu_+} + q \bigg(\frac{1 - q^r}{1 - q} \bigg) \bigg(\frac{1 - \nu_+ + \nu_-}{1 - \nu_+} \bigg) + \frac{q \nu_-}{1 - \nu_+} \bigg(- \nu_+^{R - r + 1} \bigg(\frac{1 - (q \nu_+)^r}{1 - q \nu_+} \bigg) \bigg) \vspace*{8pt} \\ & \n = \n &
    \displaystyle \frac{1}{1 - \nu_+} \bigg(1 + q \bigg(\frac{1 - q^r}{1 - q} \bigg)(1 - \nu_+ + \nu_-) - \nu_+^{R - r + 1} \bigg(1 + q \nu_- \bigg(\frac{1 - (q \nu_+)^r}{1 - q \nu_+} \bigg) \bigg) \bigg). \end{array} $$
 Using Lemma~\ref{lem:mean} also gives~$1 - \nu_+ + \nu_- = 1 - p$ and
 $$ 1 + q \nu_- \bigg(\frac{1 - (q \nu_+)^r}{1 - q \nu_+} \bigg) = \frac{1 - pq (1 + (\mu - 1)(\mu pq)^r)}{1 - \mu pq}. $$
 In conclusion,
 $$ E_r (S) = \frac{1}{1 - \mu p} \bigg(1 + q \bigg(\frac{1 - q^r}{1 - q} \bigg) (1 - p) - (\mu p)^{R - r + 1} \bigg(\frac{1 - pq (1 + (\mu - 1)(\mu pq)^r)}{1 - \mu pq} \bigg) \bigg), $$
 which completes the proof of Theorem~\ref{th:first}.

%%%%%%%%%%%%%%%%%%%%%%%%%%%%%%%%%%%%%%%%%%%%%%%%%%%%%%%%%%%%%%%%%%%%%%%%%%%%%%%%%%%%%%%%%%%%%%%%%%%%%%%%%%%%%%%%%%%%%%%%%%%%%%%%%%%%%%%%%%%%%%%%%%%%%%%%%%%%%%%%%%%%%%%%%%%%

\section{Proof of Theorem \ref{th:second} (second moment)}
 Throughout this section, we assume that the network of interactions consists of the infinite Galton-Watson tree~$\T$.
 To prove Theorem~\ref{th:second}, we follow the same strategy as for Theorem~\ref{th:first} and first compute the variance of the random variables~$X_+$ and~$X_-$, which is done in the next lemma.
\begin{lemma}-- \
\label{lem:variance}
 For~$\Sigma_{\pm}^2$ defined as in Lemmas~\ref{lem:plus} and~\ref{lem:minus}, we have
 $$ \Sigma_+^2 = p (1 - p) \mu + p^2 \sigma^2 \quad \hbox{and} \quad \Sigma_-^2 = p (1 - p)(\mu - 1) + p^2 \sigma^2. $$
\end{lemma}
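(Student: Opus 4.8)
The plan is to compute both variances via the law of total variance (the conditional variance formula), conditioning on the offspring random variable~$Y$, exactly as the means were computed in Lemma~\ref{lem:mean}. For a random variable of the form $X = \xi_1 + \cdots + \xi_N$ in which the integer-valued $N$ is independent of the i.i.d. Bernoulli$(p)$ summands~$\xi_i$, the identity $\var(X) = E(\var(X \mid N)) + \var(E(X \mid N))$ reduces the problem to two elementary pieces, because conditionally on~$N$ the sum is just a fixed number of independent Bernoulli trials. The only structural inputs are $E(Y) = \mu$ and $\var(Y) = \sigma^2$.

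First I would treat $X_+ = \xi_1 + \cdots + \xi_Y$. Conditioning on~$Y$, the conditional law of~$X_+$ is that of a sum of~$Y$ independent Bernoulli$(p)$ variables, so that $E(X_+ \mid Y) = pY$ and $\var(X_+ \mid Y) = p (1 - p) Y$. Taking the expectation of the conditional variance and the variance of the conditional mean, and using independence of~$Y$ from the~$\xi_i$, gives
$$ \Sigma_+^2 = E(p (1 - p) Y) + \var(pY) = p (1 - p) \mu + p^2 \sigma^2, $$
which is the first claimed identity.

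The computation for $X_- = \xi_1 + \cdots + \xi_{Y - 1}$ is entirely parallel. Conditioning on~$Y$, the sum now has $Y - 1$ terms, so $E(X_- \mid Y) = p (Y - 1)$ and $\var(X_- \mid Y) = p (1 - p)(Y - 1)$. The one point worth flagging is that subtracting the constant~$1$ shifts the mean but leaves the variance unchanged, i.e. $E(Y - 1) = \mu - 1$ while $\var(Y - 1) = \var(Y) = \sigma^2$. The conditional variance formula then yields
$$ \Sigma_-^2 = E(p (1 - p)(Y - 1)) + \var(p (Y - 1)) = p (1 - p)(\mu - 1) + p^2 \sigma^2, $$
as desired.

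I do not expect any genuine obstacle here: the whole content of the lemma is the decomposition of a compound (randomly indexed) sum into the expected within-group variability and the variability of the group means, and the two cases differ only by replacing~$Y$ with~$Y - 1$. The single place to be careful is to use throughout that~$Y$ is independent of the~$\xi_i$, which is precisely what legitimizes the factorization $E(Y) E(\xi_n)$ appearing in Lemma~\ref{lem:mean} and the analogous steps above.
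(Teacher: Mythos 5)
Your proposal is correct and follows exactly the paper's own argument: the law of total variance conditioned on~$Y$, with $E(X_\pm \mid Y)$ and $\var(X_\pm \mid Y)$ computed from the Bernoulli structure, yielding $E(p(1-p)Y) + \var(pY)$ and its $Y-1$ analogue. No differences worth noting.
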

\begin{proof}
 Conditioning on~$Y$ and using the law of total variance, we get
 $$ \begin{array}{rcl}
    \Sigma_+^2 = \var (X_+) & \n = \n & E (\var (X_+ \,| \,Y)) + \var (E (X_+ \,| \,Y)) \vspace*{4pt} \\
                            & \n = \n & E (p (1 - p) Y) + \var (pY) = p (1 - p) \mu + p^2 \sigma^2. \end{array} $$
 Similarly, for the variance of~$X_-$,
 $$ \begin{array}{rcl}
    \Sigma_-^2 = \var (X_-) & \n = \n & E (\var (X_- \,| \,Y)) + \var (E (X_- \,| \,Y)) \vspace*{4pt} \\
                            & \n = \n & E (p (1 - p)(Y - 1)) + \var (p (Y - 1)) = p (1 - p)(\mu - 1) + p^2 \sigma^2. \end{array} $$
 This completes the proof.
\end{proof} \\ \\
 We are now ready to prove Theorem~\ref{th:second}.
 Taking the square in~\eqref{eq:size}
 $$ \begin{array}{rcl} S^2 & \n = \n &
    \displaystyle S (x_r)^2 + 2 S (x_r) \ \sum_{i = 1}^k \,S (x_{r - i} \setminus x_{r - i + 1}) + \bigg(\sum_{i = 1}^k \,S (x_{r - i} \setminus x_{r - i + 1}) \bigg)^2 \vspace*{4pt} \\ & \n = \n &
    \displaystyle S (x_r)^2 + 2 S (x_r) \ \sum_{i = 1}^k \,S (x_{r - i} \setminus x_{r - i + 1}) \vspace*{0pt} \\ && \hspace*{25pt} + \
    \displaystyle \sum_{i = 1}^k \,S (x_{r - i} \setminus x_{r - i + 1})^2 + \sum_{i \neq j} \,S (x_{r - i} \setminus x_{r - i + 1}) S (x_{r - j} \setminus x_{r - j + 1}) \end{array} $$
 then conditioning on the event~$D = k$ and using independence of the random variables in~\eqref{eq:subtrees}~(because they represent the number of wet vertices in disjoint subtrees), we obtain
\begin{equation}
\label{eq:second}
  \begin{array}{rcl}
  \displaystyle E_r (S^2) = \sum_{k = 0}^r \bigg(E (S (x_r)^2) & \n + \n &
  \displaystyle 2 E (S (x_r)) \ \sum_{i = 1}^k \,E (S (x_{r - i} \setminus x_{r - i + 1})) \vspace*{-4pt} \\ & \n + \n &
  \displaystyle \sum_{i = 1}^k \,E (S (x_{r - i} \setminus x_{r - i + 1})^2) \vspace*{0pt} \\ & \n + \n &
  \displaystyle \sum_{i \neq j} \,E (S (x_{r - i} \setminus x_{r - i + 1})) \,E (S (x_{r - j} \setminus x_{r - j + 1})) \bigg) P (D = k). \end{array}
\end{equation}
 Using the monotone convergence theorem, taking the limit as~$R \to \infty$ in (and simplifying) the expressions in Lemma~\ref{lem:plus}, we deduce that, on the infinite Galton-Watson tree,
\begin{equation}
\label{eq:infinite-1}
  E (S (x_r)) = \frac{1}{1 - \nu_+} \quad \hbox{and} \quad E (S (x_r)^2) = \frac{1}{(1 - \nu_+)^2} \ \bigg(1 + \frac{\Sigma_+^2}{1 - \nu_+} \bigg).
\end{equation}
 This, together with Lemma~\ref{lem:minus}, implies that
\begin{equation}
\label{eq:infinite-2}
  \begin{array}{rcl}
    E (S (x_{r - i} \setminus x_{r - i + 1})) & \n = \n &
  \displaystyle 1 + \frac{\nu_-}{1 - \nu_+} \vspace*{4pt} \\
    E (S (x_{r - i} \setminus x_{r - i + 1})^2) & \n = \n &
  \displaystyle 1 + \frac{2 \nu_-}{1 - \nu_+} + \frac{\nu_-}{(1 - \nu_+)^2} \ \bigg(1 + \frac{\Sigma_+^2}{1 - \nu_+} \bigg) + \frac{\Sigma_-^2 + \nu_-^2 - \nu_-}{(1 - \nu_+)^2} \vspace*{8pt} \\ & \n = \n &
  \displaystyle 1 + \frac{2 \nu_-}{1 - \nu_+} + \frac{\Sigma_-^2 + \nu_-^2}{(1 - \nu_+)^2} + \frac{\Sigma_+^2 \nu_-}{(1 - \nu_+)^3}. \end{array}
\end{equation}
 Using that, in the limit as~$R \to \infty$, the terms in the two sums over~$i$ and the terms in the sum over~$i \neq j$ in equation~\eqref{eq:second} are constant, and using~\eqref{eq:infinite-1} and~\eqref{eq:infinite-2}, we obtain
 $$ \begin{array}{rcl}
    \displaystyle E_r (S^2) & \n = \n &
    \displaystyle \frac{1}{(1 - \nu_+)^2} \ \bigg(1 + \frac{\Sigma_+^2}{1 - \nu_+} \bigg) + \frac{2}{1 - \nu_+} \bigg(1 + \frac{\nu_-}{1 - \nu_+} \bigg) E (D) \vspace*{8pt} \\ && \hspace*{5pt} + \
    \displaystyle \bigg(1 + \frac{2 \nu_-}{1 - \nu_+} + \frac{\Sigma_-^2 + \nu_-^2}{(1 - \nu_+)^2} + \frac{\Sigma_+^2 \nu_-}{(1 - \nu_+)^3} \bigg) E (D) + \bigg(1 + \frac{\nu_-}{1 - \nu_+} \bigg)^2 E (D (D - 1)) \vspace*{8pt} \\ & \n = \n &
    \displaystyle \frac{1}{(1 - \nu_+)^2} \ \bigg(1 + \frac{\Sigma_+^2}{1 - \nu_+} \bigg) +
    \displaystyle \bigg(1 + \frac{2 (1 + \nu_-)}{1 - \nu_+} + \frac{2 \nu_- + \Sigma_-^2 + \nu_-^2}{(1 - \nu_+)^2} + \frac{\Sigma_+^2 \nu_-}{(1 - \nu_+)^3} \bigg) E (D) \vspace*{8pt} \\ && \hspace*{5pt} + \
    \displaystyle \bigg(1 + \frac{\nu_-}{1 - \nu_+} \bigg)^2 E (D (D - 1)). \end{array} $$
 Then, using Lemma~\ref{lem:D}, we get
 $$ \begin{array}{rcl}
    \displaystyle E_r (S^2) & \n = \n &
    \displaystyle \frac{1}{(1 - \nu_+)^2} \ \bigg(1 + \frac{\Sigma_+^2}{1 - \nu_+} \bigg) \vspace*{8pt} \\ && \hspace*{5pt} + \
    \displaystyle \bigg(1 + \frac{2 (1 + \nu_-)}{1 - \nu_+} + \frac{2 \nu_- + \Sigma_-^2 + \nu_-^2}{(1 - \nu_+)^2} + \frac{\Sigma_+^2 \nu_-}{(1 - \nu_+)^3} \bigg) \ q \bigg(\frac{1 - q^r}{1 - q} \bigg) \vspace*{8pt} \\ && \hspace*{5pt} + \
    \displaystyle \bigg(1 + \frac{\nu_-}{1 - \nu_+} \bigg)^2 \ \frac{2q^2 (1 - rq^{r - 1} + (r - 1) q^r)}{(1 - q)^2}. \end{array} $$
 and finally Lemmas~\ref{lem:mean} and~\ref{lem:variance},
 $$ \begin{array}{rcl}
    \displaystyle E_r (S^2) & \n = \n &
    \displaystyle \frac{1}{(1 - \mu p)^2} \ \bigg(1 + \frac{p (1 - p) \mu + p^2 \sigma^2}{1 - \mu p} \bigg) \vspace*{8pt} \\ && \hspace*{5pt} + \
    \displaystyle \bigg(1 + \frac{2 (1 + (\mu - 1) p)}{1 - \mu p} + \frac{2 (\mu - 1) p + p (1 - p)(\mu - 1) + p^2 \sigma^2 + (\mu - 1)^2 p^2}{(1 - \mu p)^2} \vspace*{8pt} \\ && \hspace*{100pt} + \
    \displaystyle \frac{(p (1 - p) \mu + p^2 \sigma^2)(\mu - 1) p}{(1 - \mu p)^3} \bigg) \ q \bigg(\frac{1 - q^r}{1 - q} \bigg) \vspace*{8pt} \\ && \hspace*{5pt} + \
    \displaystyle \bigg(1 + \frac{(\mu - 1) p}{1 - \mu p} \bigg)^2 \ \frac{2q^2 (1 - rq^{r - 1} + (r - 1) q^r)}{(1 - q)^2}, \end{array} $$
 which completes the proof of Theorem~\ref{th:second}.

%%%%%%%%%%%%%%%%%%%%%%%%%%%%%%%%%%%%%%%%%%%%%%%%%%%%%%%%%%%%%%%%%%%%%%%%%%%%%%%%%%%%%%%%%%%%%%%%%%%%%%%%%%%%%%%%%%%%%%%%%%%%%%%%%%%%%%%%%%%%%%%%%%%%%%%%%%%%%%%%%%%%%%%%%%%%

\section{Proof of Theorem \ref{th:decay} (exponential decay)}
 To prove exponential decay of the diameter of the cluster of wet vertices, the idea is to study the process that keeps track of the random number of wet vertices at distance~$n$ from the highest wet vertex in the tree.
 More precisely, on the event~$D = k$, we let
 $$ X_n = \card (\C_n) \quad \hbox{where} \quad \C_n = \{x \in \C : d (x, x_{r - k}) = n \}. $$
 Recall that~$x_{r - k}$ is the unique vertex along the path connecting the source of the information and the root of the tree that is at distance~$k$ of the source of the information.
 The next lemma shows that, in the subcritical phase~$\mu p < 1$, the expected value of the process decays exponentially.
\begin{lemma}-- \
\label{lem:decay}
 Given that the information starts at distance~$r$ from the root,
 $$ E_r (X_{n + 1}) = \left\{\begin{array}{ccl}
                      \mu p \,E (X_n) + (1 - p) \,q^{n + 1} & \hbox{for} & n < r \vspace*{2pt} \\
                      \mu p \,E (X_n)                       & \hbox{for} & n \geq r. \end{array} \right. $$
\end{lemma}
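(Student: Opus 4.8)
The plan is to study the cluster through its level sets as seen from the highest wet vertex and to extract a one-step branching recursion for the mean occupation numbers, which I then average over the conditioning variable $D$. First I would record that, on the event $D = k$, the highest wet vertex is $x_{r-k}$ while its parent $x_{r-k-1}$ is dry; since the only edge joining the subtree rooted at $x_{r-k}$ to the rest of the tree passes through this dry parent, every wet vertex is a descendant of $x_{r-k}$, so that $X_n$ simply counts the wet vertices lying at depth $n$ below $x_{r-k}$ (with $X_0 = 1$). The structural fact driving the recursion is that a vertex $v$ at depth $n+1$ is wet if and only if its parent $u$ at depth $n$ is wet and the parent-to-offspring arrow on the edge $\{u,v\}$ is open; this arrow is open with probability $p$ and is independent of the event that $u$ is wet, because the latter is determined by the states of arrows on edges other than $\{u,v\}$.

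Next I would decompose $X_{n+1} = \sum_u W_u$, the sum of the numbers $W_u$ of wet children over wet vertices $u$ at depth $n$, and split the depth-$n$ wet vertices into the generic ones and the single vertex $x_{r-k+n}$ lying on the path from $x_{r-k}$ to $x$ (which is present precisely when $n < k$). For a generic $u$ the wet children are distributed as $X_+$, with mean $\nu_+ = \mu p$ by Lemma~\ref{lem:mean}. For the path vertex the situation differs: its child $x_{r-k+n+1}$ on the path is already wet because the information has travelled upward to $x_{r-k}$, so $W_u = 1 + X_-$, where $X_-$ counts the wet vertices among its remaining children and has mean $\nu_- = (\mu-1)p$ by Lemma~\ref{lem:mean}, in agreement with the excluded-offspring bookkeeping of Lemma~\ref{lem:minus}. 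Conditioning on the configuration $\mathcal{F}_n$ at depths at most $n$ and using that the generic wet vertices number $X_n - \mathbf{1}\{n<k\}$, I obtain $E_r(X_{n+1} \mid D=k, \mathcal{F}_n) = \nu_+ X_n + (1 + \nu_- - \nu_+)\mathbf{1}\{n<k\}$, and the identity $1 + \nu_- - \nu_+ = 1 - p$ simplifies the correction term to $(1-p)\mathbf{1}\{n<k\}$.

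Finally I would take expectations over $\mathcal{F}_n$ and then average over $D$ with the weights $P(D=k) = q^k(1-q)$ for $k < r$ and $P(D=r) = q^r$ established in the proof of Lemma~\ref{lem:D}. This yields $E_r(X_{n+1}) = \mu p\, E_r(X_n) + (1-p)\,P(D>n)$, and the remaining geometric-tail computation gives $P(D>n) = q^{n+1}$ when $n < r$ and $P(D>n) = 0$ when $n \geq r$, which is exactly the claimed dichotomy.

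I expect the main obstacle to be the careful accounting of the single path vertex at each level: one must verify that conditioning on $D=k$ only fixes the upward ($q$) arrows along the path and therefore leaves the downward ($p$) spreading inside the subtree with its unconditioned law, so that the generic branching mean $\mu p$ and the excluded-child mean $\nu_-$ continue to apply, and that the guaranteed wet child contributed by the upward spread is counted exactly once.
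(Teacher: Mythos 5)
Your proposal is correct and follows essentially the same route as the paper: you single out the one path vertex $x_{r-k+n}$ at each level, give it mean $1+\nu_-=(\mu-1)p+1$ wet offspring versus $\mu p$ for the generic wet vertices, reduce the correction to $(1-p)\mathbf{1}\{n<k\}$, and then average over $D$ to get $(1-p)P(D>n)=(1-p)q^{n+1}$ for $n<r$. Your added care about the conditioning on $D=k$ affecting only the upward arrows along the spine is a worthwhile justification that the paper leaves implicit, but it does not change the argument.
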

\begin{proof}
 Recall that each vertex produces~$\mu$ offspring in average and that each of the offspring of a wet vertex is wet independently with probability~$p$, from which it follows that each wet vertex has~$\mu p$ wet offspring in average.
 Now, given~$D = k$, the process is conditioned so that
 $$ x_{r - k} \in \C_0, \quad x_{r - k + 1} \in \C_1, \quad \ldots \quad x_{r - 1} \in \C_{k - 1} \quad \hbox{and} \quad  x_r \in \C_k $$
 are wet therefore, until generation~$k - 1$, the vertices in~$\C_n$ all have~$\mu p$ wet offspring in average except for vertex~$x_{r - k + n}$ that has~$(\mu - 1) p + 1$ wet offspring in average.
 This implies that
 $$ E_r (X_{n + 1} \,| \,X_n, D = k) = \left\{\begin{array}{ccl}
                                       \mu p (X_n - 1) + (\mu - 1) p + 1 & \hbox{for} & n < k \vspace*{2pt} \\
                                       \mu p X_n     & \hbox{for} & n \geq k. \end{array} \right. $$
 Conditioning on the random variable~$D$, we deduce that
 $$ \begin{array}{rcl}
     E_r (X_{n + 1} \,| \,X_n) & \n = \n &
    \displaystyle \sum_{k = 0}^{\infty} \,E_r (X_{n + 1} \,| \,X_n, D = k) P_r (D = k) \vspace*{4pt} \\ & \n = \n &
    \displaystyle \sum_{k = 0}^n \ (\mu p X_n) \,P_r (D = k) + \sum_{k = n + 1}^{\infty} (\mu p (X_n - 1) + (\mu - 1) p + 1) \,P_r (D = k) \vspace*{4pt} \\ & \n = \n &
    \displaystyle \sum_{k = 0}^n \ (\mu p X_n) \,P_r (D = k) + \sum_{k = n + 1}^{\infty} (\mu p X_n + 1 - p) \,P_r (D = k) \vspace*{4pt} \\ & \n = \n &
    \displaystyle \sum_{k = 0}^{\infty} \ (\mu p X_n) \,P_r (D = k) + (1 - p) \sum_{k = n + 1}^{\infty} P_r (D = k) =
    \mu p X_n + (1 - p) P_r (D > n). \end{array} $$
 Recalling the probability mass function of~$D$, we deduce that
 $$ E_r (X_{n + 1}) = E (E (X_{n + 1} \,| \,X_n)) = \left\{\begin{array}{ccl}
                                                    \mu p \,E (X_n) + (1 - p) \,q^{n + 1} & \hbox{for} & n < r \vspace*{2pt} \\
                                                    \mu p \,E (X_n)                       & \hbox{for} & n \geq r. \end{array} \right. $$
 This completes the proof.
\end{proof} \\ \\
 It follows from the lemma that, for all~$n \leq r$,
 $$ \begin{array}{rcl}
     E_r (X_n) & \n = \n & (\mu p) \,E_r (X_{n - 1}) + (1 - p) \,q^n \vspace*{4pt} \\
               & \n = \n & (\mu p)^2 E_r (X_{n - 2}) + (\mu p) (1 - p) \,q^{n - 1} + (1 - p) \,q^n \vspace*{4pt} \\
               & \n = \n & (\mu p)^3 E_r (X_{n - 3}) + (\mu p)^2 (1 - p) \,q^{n - 2} + (\mu p)(1 - p) \,q^{n - 1} + (1 - p) \,q^n \vspace*{4pt} \\
               & \n = \n & (\mu p)^n E_r (X_0) + (1 - p)((\mu p)^{n - 1} \,q + (\mu p)^{n - 2} \,q^2 + \cdots + (\mu p) \,q^{n - 1} + q^n) \vspace*{4pt} \\
               & \n \leq \n & (\mu p)^n E_r (X_0) + (\mu p)^{n - 1} \,q + (\mu p)^{n - 2} \,q^2 + \cdots + (\mu p) \,q^{n - 1} + q^n. \end{array} $$
 Then, using that~$E_r (X_0) = 1$, we get
 $$ E_r (X_n) \leq \sum_{k = 0}^n \,(\mu p)^{n - k} q^k = (\mu p)^n \,\sum_{k = 0}^n \bigg(\frac{q}{\mu p} \bigg)^k = \frac{1 - (q / \mu p)^{n + 1}}{1 - (q / \mu p)} \ (\mu p)^n \quad \hbox{for all} \quad n \leq r. $$
 Observing also that, for all~$n > r$,
 $$ E_r (X_n) \leq (\mu p) \,E_r (X_{n - 1}) \leq (\mu p)^2 E_r (X_{n - 2}) \leq \cdots \leq (\mu p)^{n - r} E_r (X_r) $$
 we conclude that, for all~$n > r$, the diameter exceeds~$2n$ with probability
 $$ \begin{array}{rcl} P_r (\diam (\C) \geq 2n) & \n \leq \n &
    \displaystyle P_r (X_n > 0) = \sum_{k = 1}^{\infty} \,P_r (X_n = k) \leq \sum_{k = 1}^{\infty} \,k \,P_r (X_n = k) = E_r (X_n) \vspace*{8pt} \\ & \n \leq \n &
    \displaystyle (\mu p)^{n - r} E_r (X_r) \leq (\mu p)^{n - r} \ \frac{1 - (q / \mu p)^{r + 1}}{1 - (q / \mu p)} \ (\mu p)^r =
    \displaystyle \frac{1 - (q / \mu p)^{r + 1}}{1 - (q / \mu p)} \ (\mu p)^n. \end{array} $$
 This completes the proof of Theorem~\ref{th:decay}.

%%%%%%%%%%%%%%%%%%%%%%%%%%%%%%%%%%%%%%%%%%%%%%%%%%%%%%%%%%%%%%%%%%%%%%%%%%%%%%%%%%%%%%%%%%%%%%%%%%%%%%%%%%%%%%%%%%%%%%%%%%%%%%%%%%%%%%%%%%%%%%%%%%%%%%%%%%%%%%%%%%%%%%%%%%%%

%\section{Conclusion}
%\hspace{\parindent} In this paper, as a main theoretical contribution to the existing financial market modeling literature, we develop a structural percolation model for the spread of information within a financial market that is modeled as a tree structure. From our results, we found the exact first moment of the cluster size on a finite tree and the second moment on the infinite tree. Therefore, these moments can be used to find the mean and variance of number of companies that would possess the new information in order to assess the price of the shares or any other financial securities. In addition, we found that the tail distribution of the radius of the infection decays exponentially.

%There are many opportunities for further research following this work. One is to include loops in the tree, so that traders can buy from multiple sources. Another is to consider another type of structure instead of a tree e.g. a star with bidirectional probabilities.
 
%%%%%%%%%%%%%%%%%%%%%%%%%%%%%%%%%%%%%%%%%%%%%%%%%%%%%%%%%%%%%%%%%%%%%%%%%%%%%%%%%%%%%%%%%%%%%%%%%%%%%%%%%%%%%%%%%%%%%%%%%%%%%%%%%%%%%%%%%%%%%%%%%%%%%%%%%%%%%%%%%%%%%%%%%%%%

\bibliographystyle{plain}
\bibliography{biblio.bib}

\end{document}